\documentclass[11pt,reqno]{amsart}
\setlength{\voffset}{-.25in}
\usepackage{amssymb,latexsym}
\usepackage{graphicx}
\usepackage{url}		

\textwidth=6.175in
\textheight=9.0in
\headheight=13pt
\calclayout

\makeatletter
\newcommand{\monthyear}[1]{%
  \def\@monthyear{\uppercase{#1}}}
\newcommand{\volnumber}[1]{%
  \def\@volnumber{\uppercase{#1}}}
\AtBeginDocument{%
\def\ps@plain{\ps@empty
  \def\@oddfoot{\@monthyear \hfil \thepage}%
  \def\@evenfoot{\thepage \hfil \@volnumber}}
\def\ps@firstpage{\ps@plain}
\def\ps@headings{\ps@empty
  \def\@evenhead{%
    \setTrue{runhead}%
    \def\thanks{\protect\thanks@warning}%
    \uppercase{The Fibonacci Quarterly}\hfil}%
  \def\@oddhead{%
    \setTrue{runhead}%
    \def\thanks{\protect\thanks@warning}%
    \hfill\uppercase{On $X$-coordinates of Pell equations which are repdigits}}%
  \let\@mkboth\markboth
  \def\@evenfoot{%
    \thepage \hfil \@volnumber}%
  \def\@oddfoot{%
    \@monthyear \hfil \thepage}%
  }%
  \def\({\left(}
\def\){\right)}
\footskip=25pt
\pagestyle{headings}%
}
\makeatother

\theoremstyle{plain}
\numberwithin{equation}{section}
\newtheorem{thm}{Theorem}[section]
\newtheorem{theorem}[thm]{Theorem}
\newtheorem{lemma}[thm]{Lemma}

\newtheorem{problem}[thm]{Problem}

\begin{document}
\monthyear{Month Year}
\volnumber{Volume, Number}
\setcounter{page}{1}

\title{On $X$-coordinates of Pell equations which are repdigits}
\author{Bernadette Faye}
\address{
Ecole Doctorale de Math\'ematiques et d'Informatique \newline
Universit\'e Cheikh Anta Diop de Dakar \newline
BP 5005, Dakar Fann, Senegal \newline
School of Mathematics, University of the Witwatersrand \newline
Private Bag X3, Wits 2050, South Africa}
\email{bernadette@aims-senegal.org} 
\thanks{We thank the anonymous referee for a careful reading of the manuscript. 
B. F. thanks OWSD and Sida (Swedish International Development Cooperation Agency) for a scholarship during her Ph.D. studies at Wits. She also thanks Mouhamed M. Fall of AIMS-Senegal for useful conversations. Parts of this research work was done when F. L. visited the Max Planck Institute for Mathematics in Bonn, Germany in 2017.  He wants to thank the institution for the invitation. He is also supported by grants CPRR160325161141 and an A-rated researcher award both from the NRF of South Africa and by grant no. 17-02804S of the Czech Granting Agency.}
\author{Florian Luca}
\address{
School of Mathematics, University of the Witwatersrand \newline
Private Bag X3, Wits 2050, South Africa and\newline
Department of Mathematics, Faculty of Sciences \newline 
University of Ostrava 30\newline 
dubna 22, 701 03 Ostrava 1, Czech Republic}
\email{Florian.Luca@wits.ac.za}

\begin{abstract}
Let $b\ge 2$ be a given integer. In this paper, we show that there only finitely many positive integers $d$ which are not squares, such that the Pell equation $X^2-dY^2=1$ has two positive integer solutions $(X,Y)$ with the property  that their $X$-coordinates are base $b$-repdigits. Recall that a base $b$-repdigit is a positive integer all whose digits have the same value when written in base $b$. We also give an upper bound on the largest such $d$ in terms of $b$.
\end{abstract}

\maketitle

\section{Introduction}
Let $d>1$ be a positive integer which is not a perfect square. It is well known that the Pell equation

\begin{equation}
\label{eq:1}
X^2-dY^2=1
\end{equation}
has infinitely many positive integer solutions $(X,Y)$. Furthermore, putting $(X_1,Y_1)$ for the smallest solution, all solutions are of the form $(X_n,Y_n)$ 
for some positive integer $n$, where 
$$ 
X_n+\sqrt{d}Y_n = (X_1 +\sqrt{d}Y_1)^n.
$$
There are many papers in the literature which treat Diophantine equations involving members of the sequences $\{X_n\}_{n\geq 1}$ and/or $\{Y_n\}_{n\geq 1}$,
such as when are these numbers squares, or perfect powers of fixed or variable exponents of some others positive integers, or Fibonacci numbers, etc. (see, for example, \cite{BMS}, \cite{Cohn1}, \cite{Cohn2}, \cite{FL}, \cite{FT}). 
Let $b\geq 2$ be an integer. A natural number $N$ is called a {\it base $b$-repdigit} if all of its base $b$-digits are equal. Setting $a\in \{1,2,\ldots,b-1\}$ for the common value 
of the repeating digit and $m$ for the number of base $b$-digits of $N$, we have
\begin{equation}
\label{eq:2}
N=a\left(\frac{b^m-1}{b-1}\right). 
\end{equation}
When $a=1$, such numbers are called {\it base $b$-repunits}. When $b=10$, we omit mentioning the base and simply say that $N$ is a {\it repdigit}. 
In \cite{LAA}, A. Dossavi-Yovo, F. Luca and A. Togb\'e proved  that when $d$ is fixed there is at most one $n$ such that $X_n$ is a repdigit except when $d=2$ (for which both $X_1=3$ and $X_3=99$ 
are repdigits) or $d=3$ (for which both $X_1=2$ and $X_2=7$ are repdigits). 
In this paper, we prove that the analogous result holds if we replace ``repdigits" by ``base $b$-repdigits", namely that 
there is at most one $n$ such that $X_n$ is a base $b$-repdigit except for finitely many $d$, and give an explicit bound 
depending on $b$ on the largest possible exceptional $d$. 

Of course, for every integer $X\geq 2$, there is a unique square-free integer $d\geq 2$  such that 
\begin{equation}
\label{eq:31}
X^2 -1=dY^2
\end{equation}
for some positive integer $Y$. In particular, if we start with an $N$ given as in \eqref{eq:2}, then $N$ is the $X$-coordinate of the solution to the Pell equation corresponding 
to the number $d$ obtained as in \eqref{eq:31}. Thus the problem becomes interesting only when we ask that the Pell equation corresponding to some $d$ (fixed or variable) 
has at least two positive integer solutions  whose $X$-coordinates are base $b$-repdigits.

Here, we apply the method from \cite{LAA}, together with an explicit estimate on the absolute value of the largest integer solution to an elliptic equation and prove the following result. 
\begin{theorem}
\label{thm:main}
Let $b\ge 2$ be fixed. Let $d\geq 2$ be squarefree and let $(X_n,Y_n):=(X_n(d),Y_n(d))$ be the $n$th positive integer solution of the Pell equation $X^2-dY^2=1$. If the Diophantine equation 
\begin{equation}
\label{eq:3}
X_n=a\left(\frac{b^m-1}{b-1}\right) \qquad {\text{with}}\qquad  a\in \{1,2,\ldots,b-1 \}
\end{equation}
has two positive integer solutions $(n,a,m)$, then 
\begin{equation}
\label{eq:conc}
d\leq \exp\left((10b)^{10^5}\right).
\end{equation}
\end{theorem}
 
The proof proceeds in two cases according to whether $n$ is even or odd. If $n$ is even, we reduce the problem to the study of integer points on some elliptic curves of a particular form. Here, we use an upper bound on the naive height 
of the  largest such point due to Baker. When $n$ is odd, we use lower bounds for linear forms in complex and $p$-adic logarithms. For a number field ${\mathbb K}$, a nonzero algebraic number $\eta\in {\mathbb K}$ and a prime ideal $\pi$ of ${\mathcal O}_{\mathbb K}$, we use $\nu_\pi(\eta)$ for the exact exponent of $\pi$ in the factorization in prime ideals of the principal fractional ideal $\eta {\mathcal O}_{\mathbb K}$ generated by $\eta$ in ${\mathbb K}$. When ${\mathbb K}={\mathbb Q}$ is the field of rational numbers and $\pi$ is some prime number $p$, then $\nu_p(\eta)$ coincides with the exponent of $p$ in the factorization of the rational number $\eta$.  

\section{Case when some  $n$ is even}

Assume that $n$ satisfies \eqref{eq:3}. Put $n=2n_1$. Then using known formulas for the solutions to Pell equations, \eqref{eq:3} implies that
\begin{equation}
\label{eq:4}
2X_{n_1}^2-1=X_{2n_1}=X_{n}=a\(\frac{b^m-1}{b-1}\).
\end{equation}
Assume first that $a=b-1$. Then \eqref{eq:4} gives
\begin{equation}
\label{eq:misodd}
2X_{n_1}^2 = b^m.
\end{equation}
We first deduce that $b$ is even. If $m=1$, then 
$$
d\le d Y_{n_1}^2=X_{n_1}^2-1<2X_{n_1}^2=b,
$$
so $d<b$, which is a much better inequality than the one we aim for in general.

From now on, we assume that  $m>1$. Then $X_{n_1}^2=b^m/2$. Since $m>1$, it follows that $X_{n_1}$ is even. This shows that $n_1$ is odd, for otherwise, if 
$n_1=2n_2$ is even, then $X_{n_1}=X_{2n_2}=2X_{n_2}^2-1$ is odd, a contradiction. Furthermore, the prime factors of $X_{n_1}$ are exactly all the prime factors of $b$. 
Let us show that $n$ is then unique. Indeed, assume that there exists $n'=2n_1'$ such that $(n',b-1,m')$ is a solution of \eqref{eq:3} and $n'\ne n$. Then also $X_{n_1'}^2=b^{m'}/2$, so $X_{n_1}$ and $X_{n_1'}$ have the same set of prime factors.  
Since $X_{n_1}=Y_{2n_1}/(2Y_{n_1})$ and $X_{n_1'}=Y_{2n_1'}/(2Y_{n_1'})$, the conclusion that $X_{n_1}$ and $X_{n_1'}$ have the same set of prime factors is false if $\max\{n_1,n_1'\}\ge 7$ by Carmichael's Theorem on Primitive Divisors for the sequence $\{Y_s\}_{s\ge 1}$ (namely that $Y_k$ has a prime factor not dividing any $Y_s$ for any $s<k$ provided that $k\ge 13$, see \cite{Ca}). Thus, $n_1,~n_1'\in \{1,3,5\}$ and one checks by hand that 
no two of $X_1,~X_3,~X_5$ can have the same set of prime factors. 

Thus, $n$ is unique and noting that $m$ is odd (for example, because the exponent of $2$ in the left--hand side of \eqref{eq:misodd} is odd), we get that $2^{(m-1)/2}$ divides $X_{n_1}$. It is known from the theory of Pell equations that $\nu_2(X_{n_1})=\nu_2(X_1)$. Hence, $X_1$ is a multiple of $2^{(m-1)/2}$. Further, since we are assuming that 
equation \eqref{eq:3} has two solutions $(n,a,m)$, it follows that there exists another solution either with $n$ even and $a\ne b-1$, or with $n$ odd. 

We now move on to analyze the case in which there exists a solution with $n$ even and $a\ne b-1$. 

\medskip

Put $m=3m_0+r$ with $r\in\{0,1,2\}$. Here, $m_0$ is a non-negative integer. Putting $x:=X_{n_1}$ and $y:=b^{m_0}$, equation \eqref{eq:4} becomes
\begin{equation}
\label{ap:7}
2x^2 -1=a\(\frac{b^ry^{3}-1}{b-1}\) .
\end{equation}
Equation \eqref{ap:7} leads to
\begin{equation}
\label{eq:ell}
X^2=Y^3+A_0,
\end{equation}
where 
$$
X:=4a(b-1)^2 b^r x,\qquad Y:=2a(b-1)b^r y,\qquad A_0:=8a^2(b-1)^3b^{2r}((b-1)-a).
$$
If $r=0$, then $A_0<2b^6\le 0.25b^{10}$ (here, we used the fact that $a(b-1-a)\le ((b-1)/2)^2<b^2/4$, a consequence of the AGM inequality). 
If $r\in \{1,2\}$, then since one of $b$ or $b-1$ is even we get that 
\begin{equation}
\label{eq:ell1}
X'^2=Y'^3+A_0',
\end{equation}
holds with integers $(X',Y',A_0')=(X/2^3,Y/2^2,A_0/2^6)$ and $A_0'=A_0/2^6<0.25 b^{10}$. Note that $A_0A_0'\ne 0$. 
Let us now recall the following result of Baker (see \cite{AB}).
  
\begin{theorem}
\label{Baker}
Let $A_0\neq 0$. Then  all integer solutions $(X,Y)$ of \eqref{eq:ell} satisfy 
 \begin{equation*}
 \label{eq:thbound}
 \max\{|X|,|Y|\}< \exp\{(10^{10}|A_0|)^{10^4}\}.
 \end{equation*}
\end{theorem}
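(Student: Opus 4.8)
The hypothesis $A_0\ne 0$ says that \eqref{eq:ell} is a Mordell equation, and the cleanest route is Baker's original method: reduce to finitely many Thue equations and then bound their solutions by lower estimates for linear forms in logarithms. First I would factor the cubic. Writing the equation in the notation of the asserted bound as $x^2=y^3+A_0$, setting $\theta=\sqrt[3]{A_0}$ and $\omega=e^{2\pi i/3}$, we have
\[
x^2=y^3+A_0=(y+\theta)(y+\omega\theta)(y+\omega^2\theta).
\]
If $A_0$ is a perfect cube this factorisation is already rational and one works in $\Q(\omega)$; otherwise set $K=\Q(\theta)$, a complex cubic field with one real and one pair of complex embeddings, so that its unit rank is $1$.

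Next I would pass to ideals in $\cO_K$. The three factors above are pairwise coprime away from the primes dividing $3A_0$, so the principal ideal $(y+\theta)$ equals $\mathfrak a^2\mathfrak b$ with $\mathfrak b$ dividing a fixed ideal whose norm is bounded in terms of $A_0$. Using the class number to principalise and the single fundamental unit $\epsilon$ to absorb even unit powers into the square, one obtains
\[
y+\theta=\mu\,\epsilon^{\delta}\beta^2,\qquad \delta\in\{0,1\},\ \beta\in\cO_K,
\]
where $\mu$ ranges over an explicit finite set of representatives bounded in terms of $A_0$. Because the left-hand side has vanishing $\theta^2$-coordinate and $\theta$-coordinate equal to $1$ in the basis $\{1,\theta,\theta^2\}$, writing $\beta=u+v\theta+w\theta^2$ turns these two linear constraints into a cubic Thue equation $F(v,w)=h$, with $F$ irreducible of bounded height and $|h|$ bounded, again in terms of $A_0$.

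The heart is then the Thue bound. Factoring $F(v,w)=a_0\prod_{j}(v-\alpha_j w)=h$ over $\Q(\alpha)$, exactly one factor $|v-\alpha_j w|$ can be small, of size $\ll |h|/\max(|v|,|w|)^2$, while the others are $\asymp\max(|v|,|w|)$. Expressing the corresponding algebraic integer as $\nu\prod_i\eta_i^{a_i}$ with $\nu$ from a finite set and $\eta_i$ fundamental units, Siegel's identity applied to three conjugates produces a linear form in logarithms $\Lambda=\sum_i a_i\log|\eta_i^{(r)}/\eta_i^{(s)}|+\log|\,\cdots\,|$ with $|\Lambda|$ exponentially small in $\max_i|a_i|\asymp\log\max(|v|,|w|)$. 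Baker's lower bound for linear forms in logarithms gives $|\Lambda|>\exp(-C\log\max_i|a_i|)$ with $C$ explicit in the heights and degrees of the $\eta_i$ and of the differences $\alpha_j-\alpha_{j'}$; comparing the two estimates caps $\max_i|a_i|$, hence $\max(|v|,|w|)$, and therefore $\max(|x|,|y|)$.

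I expect the genuine difficulty to be quantitative rather than structural: to land on the stated bound $\exp\{(10^{10}|A_0|)^{10^4}\}$ one must bound the regulator, the fundamental-unit heights, the field discriminant, the class number, and the sizes of all the coset representatives $\mu,\nu$ polynomially in $|A_0|$, and then feed these into a fully explicit version of Baker's inequality so that the dependence on $A_0$ collapses into a single exponential of a fixed power of $|A_0|$. That explicit constant bookkeeping, and not the conceptual reduction, is where essentially all the effort lies.
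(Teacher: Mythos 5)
First, a point of comparison: the paper does not prove this theorem at all --- it is quoted as Baker's 1968 result (reference \cite{AB}), and the bound $\exp\{(10^{10}|A_0|)^{10^4}\}$ is precisely the constant from Baker's paper on $y^2=x^3+k$. So your sketch must be measured against Baker's actual argument, whose skeleton you do reproduce: reduce the Mordell equation \eqref{eq:ell} to finitely many Thue equations of bounded height, then bound the Thue solutions via linear forms in logarithms. Your account of the Thue step itself (exactly one small conjugate, Siegel's identity, units, Baker's lower bound) is sound in outline.

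The genuine gap is in your reduction to a Thue equation. From $Y+\theta=\mu\epsilon^{\delta}\beta^{2}$ with $\beta=u+v\theta+w\theta^{2}$ in the pure cubic field $\Q(\theta)$, the conditions ``$\theta$-coordinate equals $1$'' and ``$\theta^{2}$-coordinate equals $0$'' are \emph{not} linear constraints on $(u,v,w)$: each coordinate of $\mu\epsilon^{\delta}\beta^{2}$ is a ternary \emph{quadratic} form in $(u,v,w)$. You are left with two quadrics in three unknowns, which is projectively the intersection of two quadric surfaces --- generically again a curve of genus one. You have gone in a circle (one elliptic equation traded for another), and no elimination of $u$ produces a binary cubic form $F(v,w)=h$; a resultant in $u$ gives an inhomogeneous quartic. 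The mismatch is dimensional: a Thue equation needs one coefficient constraint in two unknowns, while your setup (degree-$3$ field, exponent $2$) forces two constraints in three unknowns. The reduction that works, and the one Mordell and Baker use, inverts your choice of field: factor in the \emph{quadratic} field, $(X-\sqrt{A_0})(X+\sqrt{A_0})=Y^{3}$ in $\Q(\sqrt{A_0})$, deduce from ideal theory, the class number and the units that $X+\sqrt{A_0}=\lambda\gamma^{3}$ with $\lambda$ from a finite set of bounded height, write $\gamma=(u+v\sqrt{A_0})/t$ with bounded denominator $t$, and equate $\sqrt{A_0}$-coefficients: because the exponent is $3$ and there are two unknowns, this single constraint is a binary cubic Thue equation $F(u,v)=h$ of bounded height, to which your linear-forms analysis then applies. (If you insist on the cubic factorisation, you must pass to the splitting field $\Q(\theta,\omega)$ and use Siegel's identity to reach unit equations, as in Baker's treatment of hyperelliptic equations; that route never passes through a Thue equation over $\Q$.) Finally, even with the reduction repaired, what you have is a qualitative finiteness argument, whereas the entire content of the cited statement is the explicit constant; the bookkeeping you defer in your last paragraph \emph{is} the theorem.
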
 
We will apply the above theorem to equation \eqref{eq:ell} for $r=0$ and to equation \eqref{eq:ell1} for $r\in \{1,2\}$. Note that since 
$|A_0|< 0.25\cdot b^{10}$ when $r=0$ and $|A_0'|<0.25b^{10}$ when $r\in \{1,2\}$, we get that 
$$
(10^{10} |A_0|)^{10^4}\le (0.25\cdot 10^{10} b^{10})^{10^4}<0.25 (10b)^{10^5},
$$
and a similar inequality holds for $A_0$ replaced by $A_0'$. Theorem \ref{Baker} applied to equations \eqref{eq:ell}, \eqref{eq:ell1} tells us that in both cases
$$
X/2^3<\exp(0.25 (10b)^{10^5}).
$$
Since $X\ge X_{n_1}>{\sqrt{d}}$, we get that 
$$
d<2^6\exp(0.5(10b)^{10^5})<\exp((10b)^{10^5}),
$$
which is what we wanted. So, let us conclude this section by summarizing what we have proved.

\begin{lemma}
\label{lem:2}
Assume that there is a solution $(n,a,m)$ to equation \eqref{eq:3} with $n$ even. Then one of the following holds:
\begin{itemize}
\item[(i)] $a<b-1$ and 
$$
d<\exp((10b)^{10^5}).
$$
\item[(ii)] $a=b-1$, $m=1$, and $d<b$.
\item[(iii)] $a=b-1$, $m>1$. Then $b$ is even, $m$ is odd and $n=2n_1$ is unique with this property, $X_{n_1}=b^{m}/2$ and $2^{(m-1)/2}$ divides $X_1$.
\end{itemize}
\end{lemma}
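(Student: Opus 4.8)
The plan is to assemble the statement from the case analysis carried out above, organising it according to the value of the repeating digit $a$. Given a solution $(n,a,m)$ of \eqref{eq:3} with $n$ even, I would write $n=2n_1$ and invoke the doubling identity $X_{2n_1}=2X_{n_1}^2-1$ (which follows from $dY_{n_1}^2=X_{n_1}^2-1$) to reach \eqref{eq:4}. Since $a$ ranges over $\{1,\ldots,b-1\}$ and $m\ge 1$, the three alternatives (i) $a<b-1$, (ii) $a=b-1$ with $m=1$, and (iii) $a=b-1$ with $m>1$ are mutually exclusive and exhaustive, so it suffices to derive the asserted conclusion in each.

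For alternative (i) I would reduce \eqref{eq:4} to a Mordell equation and quote Baker. Writing $m=3m_0+r$ with $r\in\{0,1,2\}$ and setting $x=X_{n_1}$, $y=b^{m_0}$ turns \eqref{eq:4} into \eqref{ap:7}; clearing denominators and completing to the shape \eqref{eq:ell} gives $A_0=4a(b-1)^2b^r((b-1)-a)$, which is nonzero precisely because $a\ne b-1$. I would then check $|A_0|<4b^6\le 0.5\,b^{10}$, feed this into Theorem \ref{Baker} to bound $X=4a(b-1)^2b^r x$, and use $X\ge X_{n_1}>\sqrt d$ to conclude $d<\exp((10b)^{10^5})$.

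Alternative (ii) and the structural part of (iii) are short. When $a=b-1$, equation \eqref{eq:4} collapses to \eqref{beven1}, i.e. $2X_{n_1}^2=b^m$, whence $b$ must be even. If $m=1$ I would bound $d\le dY_{n_1}^2=X_{n_1}^2-1<2X_{n_1}^2=b$. If $m>1$, then $X_{n_1}^2=b^m/2$ is even, so $X_{n_1}$ is even; this forces $n_1$ to be odd, since an even $n_1=2n_2$ would make $X_{n_1}=2X_{n_2}^2-1$ odd. In particular the prime support of $X_{n_1}$ coincides with that of $b$.

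The main obstacle is the uniqueness claim in (iii), where I would lean on Carmichael's primitive divisor theorem. Supposing a second even solution $n'=2n_1'$ with digit $b-1$, the identities $X_{n_1}=Y_{2n_1}/(2Y_{n_1})$ and $X_{n_1'}=Y_{2n_1'}/(2Y_{n_1'})$ show that $X_{n_1}$ and $X_{n_1'}$ share their prime support; a primitive prime factor of $Y_{2\max\{n_1,n_1'\}}$, guaranteed by \cite{Ca} once that index is large enough, contradicts this unless $\max\{n_1,n_1'\}\le 6$, leaving only $n_1,n_1'\in\{1,3,5\}$ (the odd values), which I would eliminate by a direct comparison of the prime factorisations of $X_1,X_3,X_5$. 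The delicate bookkeeping is making the primitive-divisor threshold line up with the available small cases. Finally, reading the $2$-adic valuation in $2X_{n_1}^2=b^m$ yields $2^{\lfloor(m-1)/2\rfloor}\mid X_{n_1}$, and since $\nu_2(X_{n_1})=\nu_2(X_1)$ for odd $n_1$, the same power of $2$ divides $X_1$, completing (iii).
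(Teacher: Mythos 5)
Your proposal is correct and follows the paper's own proof essentially step for step: the same reduction of \eqref{eq:4} to the Mordell equation \eqref{eq:ell} with Baker's bound for the case $a<b-1$, the same elementary bound $d\le dY_{n_1}^2<2X_{n_1}^2=b$ when $a=b-1$ and $m=1$, and the same Carmichael primitive-divisor argument (applied to $Y_{2\max\{n_1,n_1'\}}$ via $X_{n_1}=Y_{2n_1}/(2Y_{n_1})$, with the small cases $n_1,n_1'\in\{1,3,5\}$ checked by hand) together with $\nu_2(X_{n_1})=\nu_2(X_1)$ for the uniqueness and divisibility claims in (iii). Note also that your statement $X_{n_1}^2=b^m/2$ is the correct form of the claim that the lemma's item (iii) records (with a typo) as $X_{n_1}=b^m/2$.
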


\section{ \small On the greatest divisor of repdigits}

From now on, we look at the case when equation \eqref{eq:3} has solutions $(n,a,m)$ with $n$ odd. Say,
$$
X_n=a\left(\frac{b^m-1}{b-1}\right).
$$
If some other solution $(n',a',m')$ to equation \eqref{eq:3} does not have $n'$ odd, then $n'$ must be even so Lemma \ref{lem:2} applies to it. If we are in one of the instances (i) or (ii) of Lemma \ref{lem:2}, then we are done. So, let us assume that we are in instance (iii) of Lemma \ref{lem:2}, so $n'$ is even, $m$ is odd and $2^{(m'-1)/2}$ divides $X_1$.
But since $n$ is odd, $X_1$ also divides $X_n$. Since $b$ is even, $(b^m-1)/(b-1)$ is odd, so $2^{(m'-1)/2}$ divides $a$. Hence, $2^{m'-1}\le 2a^2\le b^3$. Since $m'\le 2^{m'-1}$, we get that $m'\le b^3$. Writing $n'=2n_1'$, and using again (iii) of Lemma \ref{lem:2}, we get that
$$
d<X^2_1\le X^2_{n_1'}=b^{m'}/2<b^{m'}<b^{b^3}<\exp(b^4),
$$
which is good enough for us. 

From now on, we assume that both solutions of equation \eqref{eq:3} have an odd value for $n$. Let such indices be $n_1 \neq n_2$. Then
$$ 
X_{n_1}=a_1\left(\frac{b^{m_1}-1}{b-1}\right),\quad X_{n_2}=a_2\left(\frac{b^{m_2}-1}{b-1}\right), \quad {\text{\rm where}}\qquad a_1,a_2\in \{1,2,\ldots,b-1 \}. 
$$
Let $n_3:=\gcd(n_1,n_2).$ Since $n_1$ and $n_2$ are odd, from known properties of solutions to the Pell equation, we get that 
$$
X_{n_3}=\gcd(X_{n_1},X_{n_2}).
$$ 
We put $a_3:=\gcd(a_1,a_2),$ $a'_1:=a_1/a_3, a'_2:=a_2/a_3.$ We also put $m_3:=\gcd(m_1,m_2)$ and use the fact that 
$$
\gcd(b^{m_1} -1,b^{m_2}-1)=b^{m_3}-1.
$$ 
We then get,

\begin{eqnarray}
\label{eq:41}
X_{n_3} &=& \gcd(X_{n_1},X_{n_2})\nonumber\\
& = & \gcd\(a_1\frac{b^{m_1}-1}{b-1},a_2\frac{b^{m_2}-1}{b-1}\) \nonumber\\
&=& a_3\left(\frac{b^{m_3}-1}{b-1}\right)\gcd\(a'_1\frac{b^{m_1}-1}{b^{m_3}-1}, a'_2\frac{b^{m_2}-1}{b^{m_3}-1}\)\nonumber\\
& := & a_3c\left(\frac{b^{m_3}-1}{b-1}\right). 
\end{eqnarray}
The quantities inside the greatest common divisor denoted by $c$ have the properties that $a'_1,a'_2$ are coprime, $(b^{m_{1}}-1)/(b^{m_3}-1)$ and $(b^{m_{2}}-1)/(b^{m_3}-1)$ are also coprime. Thus,
$$
c=\gcd\left(a_1' , \frac{b^{m_2}-1}{b^{m_3}-1}\right)\gcd\left(\frac{b^{m_1}-1}{b^{m_3}-1}, a_2'\right),
$$
which implies that $c \le a_1' a_2'=(a_1a_2)/a_3^2$. Hence, $a_3c \le (a_1a_2)/a_3\le (b-1)^2$. Replacing $a_3 c$ by $a_3$, we retain the conclusion that 
$$
X_{n_3}=a_3\left(\frac{b^{m_3}-1}{b-1}\right),  \quad {\text{\rm where}}\quad a_3 \in \{1,2,3,\ldots,(b-1)^2\}.
$$
Since $n_1\neq n_2,$ we may assume that $n_1<n_2$, and then $n_3<n_2$ and $n_3$ is a proper divisor of $n_2$. Putting $n:=n_2/n_3, D:=X_{n_3}^2 - 1\ge d, m:=m_3, \ell:=m_2/m_3$ and relabeling $a_2$ and $a_3$ as $c$ and $a$, respectively, we can restate the problem now as follows:

\begin{problem}
\label{prob}
What can you say about $D$ such that 
\begin{eqnarray}
\label{eq:5}
X_{1} &=& a\left(\frac{b^m-1}{b-1}\right), \quad {\text{with}}\quad a  \in \{1,2,3,\ldots, (b-1)^2\}\nonumber \\
X_{n} &=& c\left(\frac{b^{m\ell}-1}{b-1}\right), \quad {\text{with}}\quad c \in \{1,2,3,\ldots,b-1\},
\end{eqnarray}
where $n>1$ is odd and $\ell,m$ are positive integers.
\end{problem}
From now on, we work with the system \eqref{eq:5}. By abuse of notation, we continue to use $d$ instead of $D$.

\section{Bounding $\ell$ in terms of $n$}

We may assume that $m\ge 100$ otherwise
$$
{\sqrt{d}}<X_1<b^{m+1}\le b^{101},
$$
so $d<b^{202}$, which is better than the inequality \eqref{eq:conc}.  We put
$$
\alpha:=X_1 +\sqrt{X_1^2-1}=X_1 + \sqrt{d}Y_1 .
$$
On one hand, from the first relation of \eqref{eq:5}, we have that
$$ 
X_1=\frac{1}{2}\left(\alpha + \alpha^{-1}\right)=a\left(\frac{b^m-1}{b-1}\right),
$$
so
\begin{equation}
\label{eq:6}
\alpha=X_1 + \sqrt{d}Y_1>X_1=\frac{1}{2}\left(\alpha + \alpha^{-1}\right)= a\left(\frac{b^m-1}{b-1}\right)>b^{m-1}.
\end{equation}
Hence, $ \alpha> b^{m-1}$ implying 
$$ 
m-1< \frac{\log \alpha }{\log b}.
$$
One the other hand, 
\begin{equation}
\label{eq:7}
\frac{X_1 + \sqrt{d}Y_1}{2}= \frac{\alpha}{2} <\frac{1}{2}(\alpha + \alpha^{-1}) = a\left(\frac{b^m-1}{b-1}\right)\le a(b^m-1)<ab^m<b^{m+2}.
\end{equation}
Hence, $ \alpha<2b^{m+2}\le b^{m+3}$ implying 
$$
\frac{\log\alpha}{\log b}<m+3. 
$$
Thus,
\begin{equation}
\label{eq:8}
m-1<\frac{\log\alpha}{\log b}<m+3.
\end{equation}
We now exploit the second relation of \eqref{eq:5}. On one hand, we get that
\begin{equation}
 \label{eq:9}
\alpha^n >X_n=c\left(\frac{b^{m\ell}-1}{b-1}\right)> b^{m\ell-1},
 \end{equation}
therefore
$$m\ell-1<n\left(\frac{\log \alpha}{\log b}\right)<n(m+3),
$$
where the last inequality follows from \eqref{eq:8}. Since $m$ is large ($m\ge 100$), we certainly get that
\begin{equation}
\label{eq:10}
\ell<2n +1.
\end{equation} 
On the other hand, 
$$
 \frac{\alpha^n}{2}<\frac{1}{2}\left(\alpha^n + \alpha^{-n}\right) =X_n <b^{m\ell} <(b\alpha)^\ell,
 $$
where the last inequality follows from \eqref{eq:6}. The  above inequalities lead to 
\begin{equation}
\label{eq:11}
\alpha^n<(2b\alpha)^\ell<(\alpha^{3/2})^\ell,
\end{equation}
where we used the fact that $\alpha^{1/2} >2b$,  or $\alpha>4b^2$, which follows from the fact that $\alpha>b^{m-1}$ together with the fact that $m\geq 100$. Inequality \eqref{eq:11} yields
$$ 
\ell>2n/3,
$$
(in particular, $\ell>1$ since $n\ge 3$), which together with \eqref{eq:10} gives 

\begin{equation}
\label{eq:12}
2n/3 < \ell< 2n+1.
\end{equation}
 
\section{Bounding $m$ in terms of $n$}

Here, we use the Chebyshev polynomial $P_{n}(X)\in \mathbb{Z}[X]$ for which $P_{n}(X_1)=X_n$. Recall that 
 $$ 
 P_n(X)=\frac{1}{2}\left((X + \sqrt{X^2-1})^n + (X-\sqrt{X^2-1})^n\right).$$ 
Using the second relation of \eqref{eq:5}, we have by Taylor's formula:
\begin{eqnarray}
\label{eq:13}
(c/(b-1)) b^{m\ell}-c/(b-1) & = & X_n\nonumber\\ 
& = & P_n(X_1)\nonumber\\
& = & P_n\left((a/(b-1))b^m-a/(b-1)\right)\\
& = & P_n(-a/(b-1))+P_n'(-a/(b-1))(a/(b-1))b^m\pmod {b^{2m}} \nonumber
\end{eqnarray}
Here, $1/(b-1) \pmod {b^m}$ is to be interpreted as the multiplicative inverse of $b-1$ modulo $b^m$, which exists since $b-1$ and $b$ are coprime.   

\medskip

{\bf Case 1.}  Suppose that $a=b-1$. 

\medskip

Then $X_1 +1=b^m$. Put $Y=X+1$ and denote 
$$
Q_n(Y):=P_n(Y-1)= \frac{1}{2}\left(\left(Y-1 + {\sqrt{(Y-1)^2-1}}\right)^n + \left(Y-1-{\sqrt{(Y-1)^2-1}}\right)^n\right).
$$
It was proved in \cite{LAA} that
$$Q_n(0)=-1\quad {\text{\rm and}}\quad \frac{dQ_n(Y)}{dY}\Big|_{Y=0}=Q'_n(0)=n^2.
$$
By Taylor's formula again, we get
$$ 
P_n(X)=Q_n(X+1)=n^2(X+1)-1 \quad\quad \pmod{ (X+1)^2}.
$$
Specializing at $X=X_1$ and $a=b-1$ and using the fact that $\ell>1$ (see \eqref{eq:12}), equation \eqref{eq:13} becomes
$$-c/(b-1)\equiv X_n \pmod{ b^{2m}}  \equiv -1 +n^2b^m \pmod{ b^{2m}} .
$$
If $c\neq b-1$, we get that $b^m\mid b-1-c$. Since $m\geq 100$ and $c<b^2$ and $c\ne b-1$, we get that $b^{100}\le |b-1-c|<b^2$, a contradiction.
If $c=b-1$, then
$$
-1\equiv -1+n^2b^m \pmod{b^{2m}}.
$$
The above congruence implies that 
\begin{equation}
\label{eq:n2}
b^m \mid n^2.
\end{equation}
 Since $n$ is odd, $b$ is odd also. Hence, $b\ge 3$. Thus, 
\begin{equation}
\label{eq:*}
b^m \le n^2 \quad {\text{\rm therefore}}\quad m\le 2\left(\frac{\log n}{\log b}\right)< 2\log n. 
\end{equation}

\medskip

{\bf Case 2.} {Suppose that $a\ne b-1$.}

\medskip

We put 
$$
\beta:=-a/(b-1) + \sqrt{(-a/(b-1))^2 -1}. 
$$
Equation \eqref{eq:13} gives
\begin{equation}
\label{eq:14}
b^m \mid P_n(-a/(b-1)) + c/(b-1),
\end{equation}
where the above divisibility is to be interpreted that $b^m$ divides the numerator of the rational number $P_n(-a/(b-1))+c/(b-1)$ written in reduced form.
We  observe that 
$$
P_n(-a/(b-1)) + c/(b-1)=(1/2) \beta^{-n}(\beta^n -\gamma)(\beta^n - \gamma^{-1}), 
$$
where $\gamma:= -c/(b-1) + \sqrt{(-c/(b-1))^2-1}$. Hence, 
\begin{equation}
\label{eq:16}
 b^m\mid (\beta^n-\gamma)(\beta^n-\gamma^{-1}). 
\end{equation}
It could be however that the right--hand side of \eqref{eq:16} is zero and then divisibility relation \eqref{eq:16} is not useful. In that case, we return to \eqref{eq:13}, use the fact that $P_n(-a/(b-1))+c/(b-1)=0$, to infer that 
\begin{equation}
\label{eq:20}
b^m\mid P_n'(-a/(b-1)).
\end{equation} 
Calculating we get
$$
P_n'(X)=\frac{n}{\sqrt{X^2-1}} \left((X+{\sqrt{X^2-1}})^n-(X-{\sqrt{X^2-1}})^n\right).
$$
Thus,
$$
b^m\mid \frac{n (b-1)}{{\sqrt{a^2-(b-1)^2}}} \left(\beta^n-\beta^{-n}\right),
$$
so 
\begin{equation}
\label{eq:21}
b^m\mid n(\beta^n-\beta^{-n}).
\end{equation} 
To continue, we need the following lemma.

\begin{lemma}
\label{lem:3}
The simultaneous system of equations $ \beta^{-n}=\beta^n=\gamma^i$ for some $i\in \{\pm 1\}$
has no solutions.
\end{lemma}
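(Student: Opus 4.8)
The plan is to use the first equation to force $\beta$ to be a root of unity, then observe that a root of unity with rational trace is extremely restricted, and finally show the one surviving case is killed by the second equation.

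First I would record the algebraic facts hiding in the definitions. Writing $X:=-a/(b-1)$, the number $\beta=X+\sqrt{X^2-1}$ is a root of $t^2-2Xt+1=0$, so its conjugate is exactly $\beta^{-1}$ and
$$
\beta+\beta^{-1}=2X=-\frac{2a}{b-1};
$$
the analogous identity $\gamma+\gamma^{-1}=-2c/(b-1)$ holds for $\gamma$. Now the first equation of the system, $\beta^{-n}=\beta^n$, is equivalent to $\beta^{2n}=1$, so $\beta$ is a root of unity; in particular $|\beta|=1$ and $\beta^{-1}=\overline{\beta}$. (If instead $a>b-1$ one already gets $0<|\beta|<1$, so $\beta^{2n}=1$ is impossible for magnitude reasons; this is why Case~2 with $a\neq b-1$ is the substantive one.)

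Next I would combine $|\beta|=1$ with the trace identity. Since $\beta^{-1}=\overline{\beta}$, we have $\beta+\beta^{-1}=2\operatorname{Re}(\beta)\in[-2,2]$, whence $2a/(b-1)\le 2$, i.e. $a\le b-1$. Because we are in the case $a\neq b-1$ and $a\ge 1$, this gives $-2a/(b-1)\in(-2,0)$. Here comes the crux: a root of unity $\zeta$ whose trace $\zeta+\zeta^{-1}$ is rational must have that trace lying in $\{0,\pm 1,\pm 2\}$ (Niven's theorem, equivalently the fact that $2\cos(2\pi k/N)$ is rational only for $N\in\{1,2,3,4,6\}$). The only such value inside $(-2,0)$ is $-1$, which forces $a=(b-1)/2$ and $\beta+\beta^{-1}=-1$; that is, $\beta$ is a primitive cube root of unity, so $\beta^3=1$.

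Finally I would feed this back into both equations. From $\beta^{2n}=1$ together with $\beta^3=1$ and $\beta\neq 1$ we get $3\mid 2n$, hence $3\mid n$ and therefore $\beta^n=1$. The second equation then reads $\gamma^i=\beta^n=1$ with $i\in\{\pm 1\}$, which forces $\gamma=1$ and so $\gamma+\gamma^{-1}=2$. But $\gamma+\gamma^{-1}=-2c/(b-1)<0$ since $c\ge 1$, a contradiction. The main obstacle is the middle paragraph: identifying $\beta$ as a cube root of unity rests on the classification of roots of unity with rational trace, whereas the surrounding magnitude and divisibility steps are routine bookkeeping.
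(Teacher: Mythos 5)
Your proof is correct and follows essentially the same route as the paper's: $\beta^{2n}=1$ forces $\beta$ to be a root of unity, its rational trace $-2a/(b-1)$ pins it down to the cube-root-of-unity case $a=(b-1)/2$ (the paper does this by noting a rational or quadratic root of unity has order in $\{1,2,3,4,6\}$ and uses the oddness of $n$ to exclude order $4$, while you invoke Niven's theorem and exclude order $4$ by the sign of the trace), after which $\gamma^{\pm 1}=\beta^n=1$ forces $\gamma=1$, i.e.\ $c=-(b-1)<0$, a contradiction. The only noteworthy variation is that your trace-sign argument never uses that $n$ is odd, and your magnitude remark $|\beta|<1$ for $a>b-1$ cleanly covers the full range $a\le (b-1)^2$ allowed in Problem~\ref{prob}, which the paper's proof glosses over by writing $1\le a<b-1$.
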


\begin{proof}
If $\beta^n=\beta^{-n}$, then $\beta^{2n}=1$. Hence, $\beta$ is a root of unity of order dividing $2n$. Since $\beta$ is rational or quadratic and $n$ is odd, 
we deduce that the order of $\beta$ is $1,2,3,6$. If the order of $\beta$ is $1,2$, we then get $\beta=\pm 1$. With $x:=-a/(b-1)$, we get
$$
x+{\sqrt{x^2-1}}=\pm 1,
$$
whose solutions are $x=\pm 1$. This leads to $a=\pm (b-1)$, which is false because $1\le a<b-1$. Hence, the order of $\beta$ is $3,6$. It follows that 
$$
x+{\sqrt{x^2-1}}=\pm (1/2\pm {\sqrt{3}} i/2).
$$
This gives $x=\pm 1/2$. Hence, $-a/(b-1)=\pm 1/2$, giving that $a=(b-1)/2$, $b$ is odd and $\beta=-(1/2\pm i{\sqrt{3}}/2)$.  Thus, $\beta^n=1$, showing that $\gamma=1$. With $y=-c/(b-1)$, we get $y+{\sqrt{y^2-1}}=1$, giving $y=1$.
Thus, $c=-(b-1)$, a contradiction.  
\end{proof}

We summarize what we did so far.

\begin{lemma}
\begin{itemize}
\item[(i)] If $a=b-1$, then 
$$
b^m\mid n^2.
$$
\item[(ii)] If $a\ne b-1$, then 
\begin{equation}
\label{eq:a1}
b^m\mid (\beta^n-\gamma)(\beta^n-\gamma^{-1}).
\end{equation}
Further, the expression appearing in the right-hand side of divisibility relation \eqref{eq:a1} either is nonzero, or it is zero in which case we additionally have
\begin{equation}
\label{eq:a2}
b^m\mid n(\beta^{n}-\beta^{-n}),
\end{equation}
and the expression appearing in the right--hand side of \eqref{eq:a2} is nonzero.
\end{itemize}
\end{lemma}
Thus, 
\begin{equation}
\label{eq:30}
b^m\mid n^2\Lambda,
\end{equation}
where either $\Lambda=1$ or
$$
\Lambda=\prod_{i=1}^s (\delta_i^{d_i}-\eta_i^{e_i}),
$$
for some $s\in \{1,2\}$, $(\delta_i,\eta_i)\in \{(\beta,\gamma),(\beta,\beta)\}$, $(d_i,e_i)\in \{(n,1),(n,-1),(n,n)\}$ for $1\le i\le s$ and furthermore $\Lambda\ne 0$.  
Let ${\mathbb K}={\mathbb Q}[\beta,\gamma]$ be of degree $D$. Note that $D\le 4$. Let $p$ be any prime factor of $b$ and let $\pi$ be some prime ideal in ${\mathbb K}$ dividing $p$. Then \eqref{eq:*} and \eqref{eq:21} tell us that 
$$
m\le 2\max\{\nu_{\pi}(\delta_i^{d_i}-\eta_i^{e_i}): 1\le i\le s\}+2D\nu_p(n).
$$
Note further that both $\beta$ and $\gamma$ are invertible modulo any prime dividing $b$. Indeed this follows, for example, because
$$
\beta=\frac{\lambda_1}{b-1}\quad {\text{\rm and}}\quad \beta^{-1}=\frac{\lambda_2}{b-1},
$$
where $\lambda_{1,2}=-a\pm {\sqrt{a^2-(b-1)^2}}$ are algebraic integers. Thus, $\lambda_1\lambda_2=b-1$, showing that if $\pi$ is any prime ideal such that one of 
$\nu_{\pi}(\lambda_1)$ or $\nu_{\pi}(\lambda_2)$ is nonzero, then $\pi\mid b-1$. In particular, $\pi\nmid b$.  A similar argument applies to $\gamma$.

Now, we use a linear form in $p$-adic logarithm due to K. Yu \cite{KY}, to get an upper bound for $m$ in terms of $n$. We recall the statement of Yu's result.

\begin{theorem}
\label{th1}
Let $\alpha_1,\ldots,\alpha_t$ be algebraic numbers in the field $\mathbb{K}$ of degree $D$, and 
$b_1,\ldots,b_t$ be nonzero integers. Put 
$$\Lambda=\alpha_1^{b_1}\ldots \alpha_t^{b_t}-1 $$
and
$$ B\geq \max\{|b_1|,\ldots,|b_t|\}. $$
Let $\pi$ be a prime ideal of ${\mathbb K}$ sitting above the rational prime $p$ of ramification $e_{\pi}$ and
$$ H_i\geq \max\{h(\alpha_i),\log p\} \quad\quad \hbox{for $i=1,\ldots,t$},$$
where $h(\eta)$ is the Weil height of $\eta$. If $\Lambda \neq 0$, then
\begin{equation}
\label{eq:211}
\nu_{\pi}(\Lambda)\leq 19(20\sqrt{t+1}D)^{2(t+1)}\cdot e_{\pi}^{t-1}\frac{p^{f_{\pi}}}{(f_{\pi}\log p)^2}\log (e^5tD)H_1\cdots H_t\log B.
\end{equation}
Here $f_{\pi}$ is the inertia degree of $\pi$, namely that positive integer such that the finite field ${\mathcal O}_{\mathbb K}/\pi$ has cardinality $p^{f_{\pi}}$. 
\end{theorem}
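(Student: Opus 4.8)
The statement is K.~Yu's bound for $p$-adic linear forms in logarithms, and the only realistic route is the Gel'fond--Baker method transposed to the non-archimedean setting (as developed by Mahler, van der Poorten, and Yu). The plan is to argue by contradiction: suppose $v_{\pi}(\Lambda)$ exceeds the claimed bound and derive a contradiction with the hypothesis $\Lambda\ne 0$. First I would pass to the completion ${\mathbb K}_{\pi}$ and fix $\pi$-adic logarithms $\log\delta_i$ of the $\delta_i$; after multiplying each $\delta_i$ by a suitable root of unity and a power of a uniformizer (absorbing the resulting terms into the height bounds $H_i$ and into $v_{\pi}$), one may assume that each $\delta_i$ lies in the domain of convergence of the $p$-adic logarithm. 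Then the linear form $L=\sum_{i} b_i\log\delta_i$ is well defined, and a large value of $v_{\pi}(\Lambda)$ forces $v_{\pi}(L)$ to be correspondingly large.

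The core of the argument is the construction of an auxiliary function. Using Siegel's lemma I would produce a nonzero polynomial $\Phi$ in $t$ variables, with rational integer coefficients of controlled height, such that the $p$-adic analytic function
\[
F(z)=\Phi\bigl(\delta_1^{\,z},\ldots,\delta_t^{\,z}\bigr)
\]
together with suitable derivatives vanishes $\pi$-adically to high order at the integer points $z=0,1,\ldots,R$ for an appropriate range $R$. Solvability is a counting matter: the number of coefficients of $\Phi$ must exceed the number of imposed vanishing conditions, and Siegel's lemma then yields a solution whose height is polynomially controlled in $t,D,p,f_{\pi},e_{\pi},H_i,B$. The delicate point at this stage is to calibrate the degrees of $\Phi$ and the vanishing orders so that the subsequent extrapolation has room to operate while the height of $\Phi$ stays small enough for the final contradiction.

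The decisive steps are extrapolation followed by a zero estimate. Using a $p$-adic analogue of the Schwarz lemma — comparing the $\pi$-adic size of $F$ on a disc with its vanishing data — I would propagate the initial high-order vanishing of $F$ to an enlarged set of integer points, gaining at each stage precisely because $v_{\pi}(L)$ is assumed large. Iterating the extrapolation shows that $F$ vanishes to high order at many more points than the degree of $\Phi$ should permit, unless $\Phi$ degenerates. The contradiction is then extracted from a multiplicity (zero) estimate on the multiplicative group ${\mathbb G}_m^{t}$: a nonzero polynomial of controlled degree cannot vanish to such high order along all the points $(\delta_1^{\,z},\ldots,\delta_t^{\,z})$ unless these lie in a proper algebraic subgroup, and tracking the numerology through Siegel's lemma reproduces exactly the exponent $19(20\sqrt{t+1}\,D)^{2(t+1)}$ together with the arithmetic factors $e_{\pi}^{\,t-1}p^{f_{\pi}}/(f_{\pi}\log p)^2$ appearing in \eqref{eq:211}. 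I expect the zero estimate to be the main obstacle: securing a multiplicity bound with the correct \emph{explicit} dependence on $t$ and $D$, rather than a merely qualitative one, is what forces the heavy combinatorial bookkeeping and is ultimately the reason the constant takes this particular shape.
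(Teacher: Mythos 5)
This statement is not proved in the paper at all: it is K.~Yu's theorem, quoted verbatim from \cite{KY} and used as a black box in the section bounding $m$ in terms of $n$. So there is no internal proof to compare your attempt against; the only fair benchmark is Yu's published proof, which runs to dozens of pages.

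Measured against that, your proposal correctly names the skeleton of the Gel'fond--Baker method in the $p$-adic setting (auxiliary polynomial via Siegel's lemma, $p$-adic Schwarz-lemma extrapolation, multiplicity estimate on ${\mathbb G}_m^t$), but it is a research program rather than a proof, and the gaps sit exactly where the theorem's content lies. First, the explicit constant $19(20\sqrt{t+1}D)^{2(t+1)}$ and, more importantly, the arithmetic factor $e_{\pi}^{t-1}p^{f_{\pi}}/(f_{\pi}\log p)^2$ are asserted to ``come out of the numerology''; in reality the factor $p^{f_{\pi}}/(f_{\pi}\log p)^2$ arises from a delicate interplay between the convergence radius of the $\pi$-adic logarithm and the cardinality $p^{f_{\pi}}$ of the residue field, which dictates how sparse the usable extrapolation points are, and recovering it is the bulk of Yu's parameter calibration. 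Second, your opening reduction --- multiplying each $\delta_i$ by a root of unity and a power of a uniformizer so that $\log_{\pi}\delta_i$ converges --- is not harmless: done naively it changes the multiplicative group generated by the $\delta_i$ and destroys precisely the dependence on $p$, $f_{\pi}$ and $D$ that the theorem makes explicit; Yu instead raises the $\delta_i$ to a controlled power and runs a Kummer-theoretic descent on group varieties (hence the title of \cite{KY}) to keep those factors sharp. Third, the zero estimate you invoke must itself be explicit and must handle the degenerate case where the points fall in a proper algebraic subgroup; citing it qualitatively cannot yield inequality \eqref{eq:211}. None of this is repairable in a short argument, which is why this paper, like all applications of such bounds, imports the theorem rather than proving it.
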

In our application, we take $t=2$, fix $i\in \{1,\ldots,s\}$ and put
$$
(\alpha_1,\alpha_2)=(\delta_i,\eta_i)\in \{(\beta,\gamma),(\beta,\beta)\},\quad (b_1,b_2)=(d_i,e_i)\in \{(n,1),(n,-1),(n,n)\},
$$ 
respectively according to which $\Lambda=\alpha_1^{b_1}\alpha_2^{-b_2}-1$ is nonzero. Thus, $B=n$ and $\alpha_1,\alpha_2\in \{\beta,\gamma\}$. 
Hence, we get
\begin{equation}
\label{eq:42}
\nu_{\pi}(\Lambda)\leq 19(20\sqrt{3}\cdot 4)^{6}\cdot e_{\pi}\frac{p^{f_{\pi}}}{(f_{\pi}\log 2)^2}\log (8e^5)H^2 \log n,
\end{equation}
where 
$$
H\geq \max\{h(\beta),h(\gamma),\log p\}.
$$
Since $\beta,\gamma$ are roots of the polynomials
$$
(b-1)^2X^2+(2a)X+1\qquad {\text{\rm and}}\qquad (b-1)^2X^2+(2c)X+1,
$$
are of degree at most $2$ and both these numbers as well as their conjugates are in absolute values at most 
$$
(b-1)^2/(b-1)+{\sqrt{((b-1)^2/(b-1))-1}}<2b,
$$
we conclude that 
$$
\max\{h(\beta),h(\gamma)\}<2\log(2b)\le 4\log b.
$$
Since $p\le b$, we can take $H=4\log b$. Furthermore, since $e_{\pi}\leq 4$ and $f_{\pi}\leq 4$, and $D\nu_p(n)\le 4( \log n)/(\log 2)<8\log n,$ inequalities \eqref{eq:30} and \eqref{eq:42} yield
\begin{equation}
\label{boundm}
m\leq 1.3\times 10^{17}b^4 (\log b)^2 \log n +16 \log n<2\times 10^{17} b^6 \log n.
\end{equation}
We record this as a lemma.
\begin{lemma}
\label{lem:11}
If system \eqref{eq:5} has a solution, then
$$m<2\times 10^{17} b^6\log n.$$
\end{lemma}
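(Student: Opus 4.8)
The plan is to prove the bound by splitting according to whether the repeated digit $a$ of $X_1$ equals $b-1$. The two regimes are governed by completely different mechanisms: an exact congruence when $a=b-1$, and a $p$-adic transcendence estimate otherwise, so the final bound will be the larger of the two, and the whole point is that the transcendence bound dominates.

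First I would treat the case $a=b-1$. Here $X_1+1=b^m$, so with $Y=X+1$ and the identities $Q_n(0)=-1$, $Q_n'(0)=n^2$ from \cite{LAA}, Taylor's formula gives $P_n(X)\equiv n^2(X+1)-1 \pmod{(X+1)^2}$. Specializing at $X=X_1$ and matching against the congruence $X_n\equiv -c/(b-1)\pmod{b^m}$ leaves two possibilities. If $c\ne b-1$ then $b^m\mid b-1-c$, which is impossible since $m\ge 100$ forces $b^{100}\le|b-1-c|<b^2$. If $c=b-1$ then the congruence collapses to $b^m\mid n^2$; as $n$ is odd, $b$ is odd and $b\ge 3$, whence $m<2(\log n)/(\log b)<2\log n$, far below the target.

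The substance of the argument is the case $a\ne b-1$. Here I would introduce $\beta=-a/(b-1)+\sqrt{(a/(b-1))^2-1}$ and $\gamma=-c/(b-1)+\sqrt{(c/(b-1))^2-1}$ and use the factorization $P_n(-a/(b-1))+c/(b-1)=\tfrac12\beta^{-n}(\beta^n-\gamma)(\beta^n-\gamma^{-1})$ to deduce $b^m\mid(\beta^n-\gamma)(\beta^n-\gamma^{-1})$ from \eqref{eq:13}. Should this product vanish, the divisibility is vacuous, so I would instead differentiate and fall back on $b^m\mid n(\beta^n-\beta^{-n})$. Lemma \ref{lem:3} guarantees that the systems $\beta^{-n}=\beta^n=\gamma^{\pm1}$ have no solution, so in every case there is a genuinely nonzero linear form $\delta_1^{b_1}\delta_2^{b_2}-1$, with $(\delta_1,\delta_2)\in\{(\beta,\gamma),(\beta,\beta)\}$ and $(b_1,b_2)\in\{(n,1),(n,-1),(n,n)\}$, for which $b^m\mid n^2(\delta_1^{b_1}\delta_2^{b_2}-1)$. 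Fixing a prime ideal $\pi$ above a prime $p\mid b$ in $\mathbb{K}=\mathbb{Q}[\beta,\gamma]$ (of degree $D\le 4$), this yields $m\le\nu_\pi(\delta_1^{b_1}-\delta_2^{b_2})+2D\nu_p(n)$, and I would bound the first term by Yu's $p$-adic estimate (Theorem \ref{th1}) with $t=2$ and $B=n$.

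To complete the estimate I would compute the heights. Since $\beta$ and $\gamma$ are roots of $(b-1)^2X^2+2aX+1$ and $(b-1)^2X^2+2cX+1$, with all conjugates of absolute value below $2b$, one obtains $\max\{h(\beta),h(\gamma)\}<2\log(2b)\le 4\log b$; as $p\le b$, the choice $H=4\log b$ is admissible in Theorem \ref{th1}. Inserting $e_\pi,f_\pi\le 4$ and $D\nu_p(n)<8\log n$ into \eqref{eq:42} gives \eqref{boundm}, and taking the maximum of the two cases leaves $m<2\times 10^{17}b^6\log n$, since the case $a=b-1$ is dominated. I expect the main obstacle to be the nonvanishing hypothesis of Yu's theorem: one must verify that at least one of the three candidate forms is truly nonzero---exactly the content of Lemma \ref{lem:3}---and then track the degree $D$, the ramification index $e_\pi$, and the residue degree $f_\pi$ carefully so that the numerical constant coming out of \eqref{eq:211} is controlled.
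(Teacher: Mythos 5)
Your proposal is correct and follows essentially the same route as the paper: the same split into the cases $a=b-1$ (exact congruence via $Q_n(0)=-1$, $Q_n'(0)=n^2$, giving $b^m\mid n^2$) and $a\ne b-1$ (the factorization $P_n(-a/(b-1))+c/(b-1)=\tfrac12\beta^{-n}(\beta^n-\gamma)(\beta^n-\gamma^{-1})$, the derivative fallback $b^m\mid n(\beta^n-\beta^{-n})$ when that product vanishes, nonvanishing via Lemma \ref{lem:3}, and Yu's theorem with $t=2$, $B=n$, $H=4\log b$, $e_\pi,f_\pi\le 4$). The height computations, the correction term $2D\nu_p(n)<8\log n$, and the final numerical synthesis into $m<2\times 10^{17}b^6\log n$ all coincide with the paper's argument.
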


\section{Bounding $n$ in terms of $b$ }
  
The second equation of (\ref{eq:5}) gives
$$ 
\alpha^n + \alpha^{-n} = 2X_n=(2c/(b-1))b^{m\ell} - (2c/(b-1)). 
$$
Since
\begin{equation}
\label{eq:line16}
(2c/(b-1))b^{m\ell} - \alpha^n=\alpha^{-n} + (2c/(b-1)),
\end{equation}
the above leads to
\begin{equation}
\label{eq:23}
0< (2c/(b-1))b^{m\ell}\alpha^{-n} -1 < \frac{3}{\alpha^n}<\frac{1}{\alpha^{n-2}}.
\end{equation}
The left hand side of \eqref{eq:23} is non-zero by the equation \eqref{eq:line16}. We find a lower bound on it using a lower bound for a nonzero linear form in logarithms  of Matveev \cite{MV} which we now state.  
\begin{theorem}
In the notation of Theorem \ref{th1}, assume in addition that $\mathbb{K}$ is real and 
$$
H_i\geq \max\{Dh(\delta_i),|\log \delta_i|,0.16\}\quad {\text{for}}\quad i=1,\ldots,t. 
$$
If $\Lambda\neq 0$, then 
\begin{equation}
\label{eq:24}
\log |\Lambda|\geq -1.4 \cdot 30^{t+3}\cdot t^{4.5}\cdot D^2(1+\log D)(1+\log B)H_1\cdots H_t.
\end{equation}
\end{theorem}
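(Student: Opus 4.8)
The plan is to follow Baker's method in the sharpened form due to Matveev, arguing by contradiction: assume $\Lambda\neq 0$ yet $\log|\Lambda|$ is smaller than the asserted bound, and derive a contradiction. Passing to the associated additive form $\Lambda'=b_1\log\delta_1+\cdots+b_t\log\delta_t$ (the hypothesis that $\mathbb{K}$ is real lets me take the principal real logarithms), the smallness of $|\Lambda|=|\prod\delta_i^{b_i}-1|$ forces $|\Lambda'|$ to be extremely small. Throughout, the parameters $H_i$, the degree $D$, and $B=\max|b_i|$ control the size of every auxiliary object, and the target constant $1.4\cdot 30^{t+3}\cdot t^{4.5}\cdot D^2(1+\log D)(1+\log B)H_1\cdots H_t$ will emerge only after a final optimization of the construction.

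First I would build an auxiliary function. Choosing integer parameters $L_0,L_1,\ldots,L_t$ and interpolation data $S,T$, I would invoke the Thue--Siegel pigeonhole lemma to produce algebraic integers $p(\lambda_0,\ldots,\lambda_t)\in\mathbb{K}$, not all zero and of controlled Weil height, so that
$$\Phi(z)=\sum_{\lambda_0=0}^{L_0}\cdots\sum_{\lambda_t=0}^{L_t} p(\lambda_0,\ldots,\lambda_t)\, z^{\lambda_0}\,\delta_1^{\lambda_1 z}\cdots\delta_t^{\lambda_t z}$$
vanishes together with its first $T$ derivatives at every integer $z$ with $0\le z\le S$. The ranges are balanced so that the number of linear conditions stays below the number of unknowns; this balancing is exactly where the factors $30^{t+3}$, $t^{4.5}$, $D^2(1+\log D)$ and $\prod H_i$ ultimately originate.

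The analytic core is the extrapolation step. Because $\Lambda'$ is assumed minuscule, the exponentials $\delta_i^{\lambda_i z}$ are nearly linearly dependent, so the values of $\Phi$ and its derivatives stay very small well beyond the initial range. Using a Hermite-type interpolation formula together with the maximum-modulus principle, I would run an inductive ``small-to-smaller'' argument that repeatedly enlarges the set of points and the order of vanishing at which $\Phi$ is controlled. After enough iterations $\Phi$ is small at strictly more points than its degrees allow, and a zero (multiplicity) estimate of Philippon type on the commutative group variety $\mathbb{G}_a\times\mathbb{G}_m^t$ — applied to the one-parameter subgroup $z\mapsto(z,\delta_1^z,\ldots,\delta_t^z)$ — then forces the coefficient polynomial $p$ to vanish identically, unless the $\delta_i$ satisfy an exceptional multiplicative relation, a degenerate case excluded precisely by $\Lambda\neq 0$. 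This contradicts the nonvanishing guaranteed by Siegel's lemma, so $|\Lambda|$ cannot be that small, and optimizing $L_i,S,T$ yields the stated inequality.

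I expect the main obstacle to be twofold. The first is securing the comparatively clean $30^{t+3}t^{4.5}$ dependence on the number of logarithms $t$, rather than the $C^{t^2}$ blow-up of a naive multivariable construction; this is Matveev's signature contribution, achieved by a single-variable reduction together with an optimal weighting in Siegel's lemma. The second is the explicit, fully quantitative zero estimate with constants compatible with the target. An alternative that avoids Siegel's lemma altogether is Laurent's interpolation-determinant method: one forms a large determinant of monomial--exponential values and bounds it both arithmetically, via the product formula (with $\Lambda\neq 0$ ensuring the determinant is nonzero), and analytically, via Schwarz's lemma and the smallness of $\Lambda$; comparing the two bounds gives the conclusion, typically with cleaner constants but with the whole difficulty concentrated in proving the determinant nonzero, which is once more a zero estimate in disguise.
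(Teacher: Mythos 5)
This statement is Matveev's theorem, which the paper quotes verbatim (up to notation) from \cite{MV} and does not prove; there is therefore no internal proof to compare against, and the only question is whether your sketch would constitute a proof on its own. It would not, and the gap is not peripheral --- it is the entire content of the statement. What the theorem asserts beyond qualitative Baker theory is the \emph{explicit} constant $1.4\cdot 30^{t+3}\cdot t^{4.5}\cdot D^2(1+\log D)$, uniform in $t$, $D$, $B$ and the heights $H_i$. Your outline correctly names the standard architecture (Siegel's lemma for the auxiliary function, extrapolation via the smallness of the linear form, a multiplicity estimate on $\mathbb{G}_a\times\mathbb{G}_m^t$, and Laurent's interpolation-determinant alternative), but every quantitative step is deferred: the parameters $L_0,\ldots,L_t,S,T$ are never chosen, the height bound from Siegel's lemma is never computed, the loss per extrapolation round is never tracked, and the constants in the zero estimate are never made explicit, with the conclusion reached only by the assertion that ``optimizing yields the stated inequality.'' In particular, the passage from the $C^{t^2}$-type blow-up of the naive multivariable construction to the $30^{t+3}t^{4.5}$ dependence --- which you yourself identify as Matveev's signature contribution --- is precisely the part you do not supply, and without it no choice of parameters in your scheme reaches the claimed bound.

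Two further concrete inaccuracies: first, the degenerate case of the zero estimate is \emph{not} ``excluded precisely by $\Lambda\neq 0$''; multiplicative relations among the $\delta_i$ are entirely compatible with $\Lambda\neq 0$, and Matveev disposes of them by an inductive descent on $t$ (eliminating one logarithm and replacing the form by one in fewer variables with controlled new coefficients), an induction your argument lacks and which materially affects the final constant. Second, the reality of $\mathbb{K}$ does not by itself let you ``take the principal real logarithms'': the $\delta_i$ could be negative, and the hypothesis $H_i\geq|\log\delta_i|$ tacitly fixes a determination that your reduction from $|\Lambda|$ small to $|\Lambda'|$ small must handle (including the possibility that $\prod\delta_i^{b_i}$ is near $1$ while individual logarithms are large, requiring the standard $|e^z-1|$ versus $|z|$ comparison with $z$ reduced modulo nothing, since the form is real). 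As it stands, your text is a competent road map to the literature rather than a proof; for the purposes of this paper the correct move is exactly what the authors do, namely to cite \cite{MV}.
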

We take $t=3$, $\delta_1=2c/(b-1)$, $\delta_2=b$, $\delta_3=\alpha$, $b_1=1$, $b_2=m\ell$, $b_3= -n$. Since $\ell\le 2n$ (see \eqref{eq:12}), we can take $B=2mn$. Now the algebraic numbers 
$\delta_1,\delta_2,\delta_3$ belong to ${\mathbb L}={\mathbb Q}[\alpha]$, a field of degree $D=2$. Since $h(\delta_1)\le \log(2 b)\le 2\log b$, we can take $H_1=4\log b$ and $H_2=2\log b$ . Furthermore, since
$\alpha$ is a quadratic unit, we can take $H_3=\log \alpha$.  Thus, we get, by \eqref{eq:23}, that
$$
(n-2)\log \alpha\le -\log \Lambda \leq 1.4\cdot 30^6\cdot 3^{4.5}\cdot 2^2\cdot (1+\log 2)(1+\log 2mn)(8(\log b)^2)\log \alpha,
$$
giving
$$
n \leq 10^{13}(1+\log (2mn)) (\log b)^2.
$$
Inserting  \eqref{boundm} into the above inequality we get
\begin{eqnarray}
\label{eq:100}
n & \leq & 10^{13}(1+\log (4\times 10^{17} b^6 n\log n))(\log b)^2\nonumber\\
& < & 10^{13}(1+\log(4\times 10^{17})+6\log b+2\log n)(\log b)^2\nonumber\\
& < & 10^{13}\cdot  43\cdot (6\log b) (2\log n) (\log b)^2\nonumber\\
& < & 6\times 10^{15} (\log b)^3 \log n.
\end{eqnarray}
In the above and in what follows we use the fact that if $x_1,\ldots,x_k$ are real numbers $>2$, then 
$$
x_1+\cdots+x_k\le x_1\cdots x_k.
$$
Lemma 1 in \cite{GL} says that if $T>3$ and
$$
\frac{n}{\log n}<T,\qquad {\text{\rm then}}\qquad n<(2T)\log T.
$$
Taking $T:=6\times 10^{15} (\log b)^3$ in the above implication and using \eqref{eq:100}, we get that
\begin{eqnarray*}
n & < & 12\times 10^{15} (\log b)^3 \log(6\times 10^{15} (\log b)^3)\\
& < & 12\times 10^{15} (\log b)^2(\log(6\times 10^{15})+3\log b)\\
& < & 12\times 10^{15} (\log b)^3 \times 37\times (3\log b)\\
& < & 2\times 10^{18} (\log b)^4.
\end{eqnarray*}
Inserting this back into the inequality of Lemma \ref{lem:11}, we get
\begin{eqnarray*}
m & \le &  2\times 10^{17} b^6 \log(2\times 10^{18} (\log b)^4)\\
& = & 2\times 10^{17}b^6(\log(2\times 10^{18})+4\log b)\\
& < & 2\times 10^{17} b^6 \times 43\times 4\log b\\
& < & 10^{20} b^7.
\end{eqnarray*}
Since $\ell\le 2n$ (see \eqref{eq:12}), we conclude the following result.
\begin{lemma}
\label{lem:12}
Under the hypothesis of Problem \ref{prob}, we have that
$$
n<10^{18} b^4,\qquad  \ell<2\times 10^{18} b^4, \qquad m<10^{20} b^7.
$$
\end{lemma}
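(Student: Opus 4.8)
The plan is to derive an inequality bounding $n$ in terms of $b$ and $m$ through a three-term linear form in logarithms, then to couple it with Lemma \ref{lem:11} so as to eliminate $m$ and obtain a self-referential bound on $n$, which I can unwind using the elementary estimate of \cite{GL}. Feeding the resulting bound back into Lemma \ref{lem:11} then controls $m$, while \eqref{eq:12} controls $\ell$.

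First I would start from the second relation of \eqref{eq:5}, writing $\alpha^n+\alpha^{-n}=2X_n$ and rearranging to play the dominant term $(2c/(b-1))b^{m\ell}$ against $\alpha^n$. This produces the near-unit quantity $\Lambda=(2c/(b-1))b^{m\ell}\alpha^{-n}-1$ satisfying $0<\Lambda<\alpha^{-(n-2)}$, as in \eqref{eq:23}, whence $\log|\Lambda|<-(n-2)\log\alpha$. The key observation is that $\Lambda$ is a genuine linear form in the logarithms of $\delta_1=2c/(b-1)$, $\delta_2=b$, $\delta_3=\alpha$ with exponents $1$, $m\ell$, $-n$, all lying in the real quadratic field $\mathbb{L}=\mathbb{Q}[\alpha]$ of degree $D=2$, and that it is nonzero because $\alpha^n$ is irrational while $(2c/(b-1))b^{m\ell}$ is rational.

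Next I would apply Matveev's lower bound to $\Lambda$. Using $h(\delta_1)\le\log(2b)$, $h(\delta_2)=\log b$, and the fact that $\alpha$ is a quadratic unit (so $h(\delta_3)=\tfrac{1}{2}\log\alpha$), I take $H_1=4\log b$, $H_2=2\log b$, $H_3=\log\alpha$, and $B=2mn$ since $\ell\le 2n$ by \eqref{eq:12}. Comparing the Matveev lower bound with the elementary upper estimate $\log|\Lambda|<-(n-2)\log\alpha$, the factor $\log\alpha$ cancels on both sides and leaves $n\le 10^{13}(1+\log(2mn))(\log b)^2$. Substituting the bound $m<2\times 10^{17}b^6\log n$ from Lemma \ref{lem:11} into the logarithm and simplifying yields the implicit inequality \eqref{eq:100}, namely $n<6\times 10^{15}(\log b)^3\log n$.

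The main obstacle is breaking the circularity: the bound for $m$ carries a $\log n$, and the bound for $n$ carries a $\log m$, so neither closes on its own; the device that resolves this is substituting Lemma \ref{lem:11} to reduce everything to the single implicit relation $n/\log n<T$ with $T=6\times 10^{15}(\log b)^3$. Invoking Lemma 1 of \cite{GL} (if $T>3$ and $n/\log n<T$ then $n<2T\log T$) unwinds this into $n<2\times 10^{18}(\log b)^4$, which is at most $10^{18}b^4$ because $2(\log b)^4<b^4$ for every $b\ge 2$. Feeding this back into Lemma \ref{lem:11} gives $m\le 2\times 10^{17}b^6\log(2\times 10^{18}(\log b)^4)<10^{20}b^7$, and finally $\ell\le 2n<2\times 10^{18}b^4$ by \eqref{eq:12}. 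The only delicate points are the careful bookkeeping of the constants through Matveev's inequality and the verification that $\Lambda\ne 0$, which is needed for the lower bound to apply.
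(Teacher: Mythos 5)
Your proposal is correct and follows essentially the same route as the paper: the same linear form $\Lambda=(2c/(b-1))b^{m\ell}\alpha^{-n}-1$ with Matveev's theorem ($t=3$, $H_1=4\log b$, $H_2=2\log b$, $H_3=\log\alpha$, $B=2mn$), the same substitution of Lemma \ref{lem:11} to reach $n<6\times 10^{15}(\log b)^3\log n$, the same appeal to Lemma 1 of \cite{GL}, and the same back-substitution for $m$ and $\ell$. As a minor bonus, your justification that $\Lambda\neq 0$ (because $\alpha^n$ is \emph{irrational} while $(2c/(b-1))b^{m\ell}$ is rational) corrects an evident typo in the paper, which asserts nonvanishing ``because $\alpha^n$ is rational.''
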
  
Finally, from \eqref{eq:5} we have that $d<X_1^2<b^{2m}$, therefore
\begin{equation*}
\label{aq:10bis}
d<b^{2\times 10^{20} b^7}<\exp(10^{20} b^{10}),
 \end{equation*}
which together with Lemma \ref{lem:2} implies the desired conclusion of Theorem \ref{thm:main}.

\section*{Acknowledgements}

We thank the referee for spotting a wrong calculation in a previous version of this manuscript.

\end{document}